\def\d{\partial}
\def\eps{\epsilon}
\newcommand{\cM}{{\mathcal M}}
\newcommand{\cG}{{\mathcal G}}
\newcommand{\cS}{{\mathcal S}}
\def\<{\langle}
\def\>{\rangle}
\def \cL{{\mathcal{L}}}
\def\E{\mathbb{E}}
\DeclareMathOperator{\tr}{tr}
\DeclareMathOperator{\vol}{vol}
\newtheorem{The}{Theorem}[section]
\newtheorem{Lem}[The]{Lemma}
\theoremstyle{definition}
\newtheorem{Rem}[The]{Remark}
\thanks{{2020 Mathematical Subject Classification.
 58D17 , 58J65, 70L10.}}
\begin{document}
\title{Stochastic perturbation of geodesics on the manifold of Riemannian metrics}
\author{ Ana Bela Cruzeiro and Ali Suri }
\address{Ana Bela Cruzeiro, GFM, Dep. Mathematics Instituto Superior técnico, Av. Rovisco Pais, 1049-001 Lisboa, Portugal}
\email{ana.cruzeiro@tecnico.ulisboa.pt}
\address{Ali Suri, Universit\"{a}t Paderborn, Warburger Str. 100,
33098 Paderborn, Germany}
\email{asuri@math.upb.de}
\maketitle {\hspace{2.5cm}}

\begin{abstract} 
In this paper, using diffusion processes, we compute the evolution equation
on the manifold of Riemannian metrics for the Lagrangian induced by the $L^2$ stochastic kinetic energy functional.

\textbf{Keywords}: Manifold of Riemannian metrics, Ebin metric, diffusion processes.\\

\end{abstract}

\pagestyle{headings} \markright{Stochastic perturbation of geodesics on the manifold of Riemannian metrics}
%
%
\section{Introduction}
The study of the infinite-dimensional manifold of Riemannian metrics  traces back to foundational works in global analysis and geometric structures on function spaces. DeWitt \cite{DeWitt} used the manifold of Riemannian metrics to develop a Hamiltonian formulation of general relativity. A major breakthrough came in the 1970s with the seminal work of David Ebin \cite{Ebin70}, who rigorously established the manifold structure of the space of all Riemannian metrics.  Manifold of Riemannian metrics as  the configuration space for general relativity with the DeWitt metric was also considered in \cite{Fischer-Marsden, Marsden-Ebin-Fischer}. Moreover, the manifold of Riemannian metrics appears in Teichm\"{u}ller theory \cite{Tromba} and shape analysis \cite{BBHM}.

A key tool introduced in this framework is the Ebin  metric, a weak Riemannian metric on the space of Riemannian metrics, defined via an $L^2$ inner product of symmetric 2-tensors. This structure has been further explored by  Freed and  Groisser in \cite{Freed}, Gil-Medrano and Michor \cite{Med-Mic} and Clarke \cite{Clarke}, leading to deeper insights into its geometric and analytical properties.

On the other hand, Cipriano and Cruzeiro \cite{Cip-Cru} and  Arnaudon et al. \cite{A-C-C14}  developed a geometric and stochastic  approach to the Navier–Stokes equations using the infinite-dimensional structure of the group of volume preserving diffeomorphisms. Inspired by Arnold’s geometric interpretation of Euler’s equations for ideal fluids \cite{Arnold66}, they extended this geometric variational method to the framework of dissipative,  non-perfect (viscous) fluids by incorporating group valued semi-martingales. This approach was also applied to the compressible Navier-Stokes equations and to magnetohydrodynamics for charged viscous compressible fluids in \cite{CCR} and incompressible fluids at the presence of the Coriolis force on the torus in \cite{Suri-SEPC}.

Recall that the Euler equations for perfect fluids and the geodesic equations on the manifold of Riemannian metrics are naturally obtained  via variational principles. However,  Navier–Stokes equations and other dissipative systems need the use of stochastic processes in order to be   formulated via (stochastic) variational principles  (e.g., \cite{A-C-C14}). \color{black}  Motivated by the above works, we complete the picture by introducing a pointwise noise into the framework of the manifold of Riemannian metrics and computing the corresponding evolution equations via a similar stochastic Lagrangian.

%
\section{Geodesic equations on the manifold of Riemannian metrics}
In this section, we recall the fundamental background on the manifold of Riemannian metrics $\mathcal{M}$, as well as a special class of semimartingales whose drift takes values in $\mathcal{M}$..

Let $M$ be a compact $3$-dimensional oriented Riemannian manifold. We may consider the noncompact case by suitable assumptions at infinity.
Following the notation of \cite{Marsden-Ebin-Fischer}, and \cite{marsden}, let $\mathcal{S}_2(M)$ be the linear Fr\'{e}chet space of all (smooth) symmetric covariant 2-tensor fields on $M$. The Sobolev case can be treated similarly \cite{BBHM}. 
Then, the space of all Riemannian metrics on $M$, denoted by  $\mathcal{M}$, forms an open cone in $\mathcal{S}_2(M)$ in both the Fr\'{e}chet and Sobolev topologies.  This implies that the tangent bundle is given by $T\mathcal{M}=\mathcal{M}\times \mathcal{S}_2(M) $.
\subsection{The Ebin  metric}
For $g\in \mathcal{M}$ and $h,k\in T_g\mathcal{M}\simeq \mathcal{S}_2(M)$, the Ebin (or $L^2$) metric is defined by
\begin{equation}\label{L2 metric}
\mathcal{G}_g(h,k)=\int_M \tr_g(h\times k)\vol(g)
\end{equation}
where $\vol(g)=\sqrt{detg_{ij}}dx_1\wedge dx_2\wedge dx_3$, $h\times k=h g^{-1}k$, $\tr_g(h)=\tr(g^{-1}h)$ and consequently $\tr_g(h\times k):=\tr(g^{-1}hg^{-1}k)$. 
Furthermore, for $h_1,\dots,h_n\in T_g\mathcal{M}$, we set  $\tr_g(h_1\times \dots\times h_n)=\tr(g^{-1}h_1\dots g^{-1}h_n)$.

The geodesic equation with respect to the Ebin metric \eqref{L2 metric} is given by
\begin{equation}\label{geo eq L2}
\d^2_t g=\d_tg \times \d_tg +\frac{1}{4}\tr_g(\d_tg  \times \d_tg)g - \frac{1}{2}\tr_g(\d_tg) \d_t g.
\end{equation}
(See e.g. \cite{Med-Mic}, Lemma 2.3.)
\subsection{Semi-martingales with $\mathcal{M}$-valued drift}
In the sequel, we consider the diffusion process on $\cM$ given by
\begin{equation}\label{diff proc} 
dg(t,x) = \sqrt{\nu} \sum_{i=1}^n H_{i}(x) dW_{i}(t) + K_\omega(t,x) dt
\end{equation} 
where $H_{i}$, $1\leq i\leq n$ are  symmetric 2-tensors on $M$. Additionally, $\nu$ is a nonnegative constant, $K_\omega \in C^\infty([a,b], \cS_2(M))$, and $W_{i}(t) $ for $1 \leq i \leq n$ are independent 1-dimensional Brownian motions. We will drop the random variable $\omega$ and write $K$ for $K_\omega$. Furthermore, we require that the drift term in \eqref{diff proc} (i.e., its expectation) takes values in $\mathcal{M}$.
\begin{Rem}
In hydrodynamics, there are canonical candidates for the counterpart of the process \eqref{diff proc}, which can lead to known correction terms in the evolution equations representing viscosity or friction \cite{A-C-C14, Cip-Cru, Suri-SEPC}.
However, for the setting of the manifold of Riemannian metrics, such canonical candidates are not yet clearly identified.\\
Some natural choices include $H_i = L_{X_i}g$, where $\{X_i\}_{i\in\mathbb{N}}$ is a family of vector fields on $M$.
Another approach is to use the decomposition of the tangent space at $g \in \mathcal{M}$ into 
\[
T_g  \mathcal{M}=V_0(g)\oplus V_1(g)
\]
where $V_0(g)=\{h\in T_g  \mathcal{M}~;~\tr_g(h)=0\}$ and $V_1(g)=C^{\infty}(M)g$. Recall that for $h\in T_g  \mathcal{M}$, the traceless part is given by  $h_0:=h-\frac{1}{3}\tr_g(h)g$.

Alternatively, one may consider symmetric 2-tensors $\{H_{ij}\}_{1 \leq i,j \leq 3}$ on $M$ whose $(i,j)$- and $(j,i)$-components are equal to $1$, with all other components vanishing. This formulation also allows for using different amplitudes $\nu_{ij}$ instead of a single constant $\nu$ as in \eqref{diff proc}. For example, when $\nu_{ij} = 0$ for $i \neq j$, $\nu_{ii} = 1$, and $W_{ii} = W$ for $1 \leq i, j \leq 3$, the perturbation occurs in a single direction, namely along $\mathrm{diag}(1,1,1)$.

In principle, even an infinite number of terms (i.e., $n = \infty$) can be employed, though this requires careful attention to the convergence of the solution, as discussed in \cite{Cip-Cru}.
\end{Rem}
%
%
%
%
\section{Stochastic perturbation of geodesics on $(\cM,\cG)$}
In this section, using the semi-martingale \eqref{diff proc}, we study the stochastic perturbation of geodesics in $(\cM,\cG)$. 
\begin{Lem}
The curve $g^{-1}$  almost surely exists  and satisfies
\begin{equation}\label{g inverse}
dg^{-1}= -\sqrt{\nu}\sum_{i}g^{-1}H_{i}g^{-1}dW_{i}(t)  -g^{-1}\big( K  -  \nu \sum_{i}H_{i}g^{-1}H_{i}\big)g^{-1}dt
\end{equation}
\end{Lem}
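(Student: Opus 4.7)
The plan is to derive \eqref{g inverse} by applying It\^o's formula pointwise to the matrix-valued map $\Phi : g \mapsto g^{-1}$. For each fixed $x \in M$, the process $t \mapsto g(t,x)$ is a semi-martingale with values in the open cone of symmetric positive-definite $3\times 3$ matrices, and since the noise in \eqref{diff proc} involves no spatial derivatives, no genuine PDE issues arise: the computation reduces to a finite-dimensional matrix SDE parametrized smoothly by $x \in M$.

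For the existence of $g^{-1}$ I would argue as follows. Since $\mathcal{M}$ is open in $\mathcal{S}_2(M)$ and $g(a) \in \mathcal{M}$, continuity of the sample paths of \eqref{diff proc} ensures that there is almost surely a (possibly random) time interval on which $g(t,x)$ remains positive-definite uniformly in $x$; on such an interval the pointwise inverse $g^{-1}(t,x)$ is well defined and smooth in $x$ by the smooth dependence of matrix inversion on its argument, so $g^{-1}$ is a legitimate $\mathcal{S}_2(M)$-valued semi-martingale.

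The main computation uses the standard Fr\'echet derivatives of $\Phi$, obtained from the Neumann expansion $(g+h)^{-1} = g^{-1} - g^{-1}hg^{-1} + g^{-1}hg^{-1}hg^{-1} - \cdots$:
\begin{equation*}
D\Phi(g)[h]=-g^{-1}hg^{-1},\qquad D^{2}\Phi(g)[h,k]=g^{-1}hg^{-1}kg^{-1}+g^{-1}kg^{-1}hg^{-1}.
\end{equation*}
It\^o's formula then yields $dg^{-1}=D\Phi(g)[dg]+\tfrac{1}{2}D^{2}\Phi(g)[dg,dg]$. Substituting $dg=\sqrt{\nu}\sum_{i}H_{i}\,dW_{i}+K\,dt$, the first term contributes the martingale piece $-\sqrt{\nu}\sum_{i}g^{-1}H_{i}g^{-1}\,dW_{i}$ together with the drift $-g^{-1}Kg^{-1}\,dt$. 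For the It\^o correction only the martingale parts contribute to the quadratic variation, and using $dW_{i}dW_{j}=\delta_{ij}\,dt$ together with the symmetry of $D^{2}\Phi$ in its two arguments,
\begin{equation*}
\tfrac{1}{2}D^{2}\Phi(g)[dg,dg]=\tfrac{1}{2}\cdot 2\nu\sum_{i}g^{-1}H_{i}g^{-1}H_{i}g^{-1}\,dt=\nu\sum_{i}g^{-1}H_{i}g^{-1}H_{i}g^{-1}\,dt.
\end{equation*}
Factoring $g^{-1}(\,\cdot\,)g^{-1}$ out of the resulting drift then produces exactly \eqref{g inverse}.

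The only real subtlety is tracking the noncommutative matrix algebra: one must verify that the symmetrization in $D^{2}\Phi$ produces precisely the factor of $2$ that cancels the $\tfrac{1}{2}$ in It\^o's formula, and that the correction term $\nu\sum_{i}H_{i}g^{-1}H_{i}$ ends up sandwiched between two copies of $g^{-1}$ rather than outside of them. Once the pointwise matrix identity is in hand, smoothness in $x \in M$ transfers to the SDE on $\mathcal{S}_2(M)$ by the smoothness of $\Phi$ and its derivatives on the open cone $\mathcal{M}$.
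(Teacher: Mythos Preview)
Your proof is correct and follows essentially the same idea as the paper: both derive \eqref{g inverse} by It\^o calculus applied to matrix inversion. The only presentational difference is that the paper obtains the formula by applying the It\^o product rule to the identity $g^{-1}g=I$ and solving for $dg^{-1}$, whereas you apply It\^o's formula directly to $\Phi(g)=g^{-1}$ with the explicit first and second Fr\'echet derivatives; the two routes are equivalent and lead to the same computation.
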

\begin{proof}
For the smooth  vector fields  $H_{i},K$ on the Banach manifold $\cM$, the process \eqref{diff proc} has a solution and its drift belongs to $\cM$. 
Moreover using It\^{o} formula we have
\begin{eqnarray*}
0=d(g^{-1}g)=(dg^{-1})g + g^{-1}dg +\langle dg^{-1}, dg\rangle
\end{eqnarray*}
which implies that
\begin{eqnarray*}
dg^{-1}&=&-g^{-1}dg g^{-1} + \langle g^{-1}dgg^{-1},dg\rangle g^{-1}\\
&=&-\sqrt{\nu}\sum_{i}g^{-1}H_{i}g^{-1}dW_{i}(t) -g^{-1}Kg^{-1}dt +  \nu \sum_{i}g^{-1}H_{i}g^{-1}H_{i}g^{-1}dt\\
&=&-\sqrt{\nu}\sum_{i}g^{-1}H_{i}g^{-1}dW_{i}(t) -g^{-1}\big( K -  \nu \sum_{i}H_{i}g^{-1}H_{i}\big)g^{-1}dt.
\end{eqnarray*}
\end{proof}
The following lemma provides a formula for computing $d\vol(g)$, where $g$ is a solution of \eqref{diff proc} and $\vol(g)=\sqrt{\det(g_{ij})}dx_1\wedge dx_2\wedge dx_3$. Define $F(g)=\sqrt{\det(g_{ij})}$.
\small\begin{Lem}\label{Lem dvol}
The following holds true
\begin{equation}\label{dvol g }
d\vol(g)= \frac{1}{2}\Big(  \sqrt{\nu}\sum_{i}\tr_g(H_{i})dW_{i}(t) + \big[ \tr_g(K) + \frac{{\nu}}{2}\sum_{i}   H_{i}\tr_g(H_{i}) +\frac{1}{2}\tr_g(H_{i})^2\big]dt\Big)\vol(g).
\end{equation}
\end{Lem}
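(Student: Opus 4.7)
The plan is to apply It\^o's formula to the smooth scalar functional $F(g)=\sqrt{\det g_{ij}}$ on the Fr\'echet space $\cS_2(M)$. Since $\vol(g)=F(g)\,dx_1\wedge dx_2\wedge dx_3$ and the 3-form $dx_1\wedge dx_2\wedge dx_3$ is deterministic and $g$-independent, proving the lemma reduces entirely to computing $dF(g)$ and then pulling the constant volume form back in at the end.

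First I would compute the first and second Fr\'echet derivatives of $F$. Jacobi's formula gives $D(\det)(g)[h]=(\det g)\tr(g^{-1}h)=(\det g)\tr_g(h)$, and hence
\begin{equation*}
DF(g)[h]=\tfrac{1}{2}F(g)\,\tr_g(h).
\end{equation*}
Differentiating once more in a direction $k$ and using the standard identity $D(g\mapsto g^{-1})(g)[k]=-g^{-1}kg^{-1}$ (already invoked in the proof of the previous lemma), I would obtain
\begin{equation*}
D^2F(g)[h,k]=\tfrac{1}{4}F(g)\,\tr_g(h)\tr_g(k)-\tfrac{1}{2}F(g)\,\tr_g(h\times k).
\end{equation*}

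Next, applying It\^o's formula
\begin{equation*}
dF(g)=DF(g)[dg]+\tfrac{1}{2}D^2F(g)[dg,dg],
\end{equation*}
the first term is read off directly from \eqref{diff proc}: the martingale part of $dg$ contributes $\tfrac{1}{2}\sqrt{\nu}\,F(g)\sum_i\tr_g(H_i)\,dW_i(t)$, and the drift gives $\tfrac{1}{2}F(g)\,\tr_g(K)\,dt$. For the quadratic-variation term, I would use the componentwise identity $d\langle g_{ij},g_{k\ell}\rangle=\nu\sum_i(H_i)_{ij}(H_i)_{k\ell}\,dt$, repackaged tensorially as $B(dg,dg)=\nu\sum_i B(H_i,H_i)\,dt$ for any continuous bilinear form $B$ on $\cS_2(M)$, and apply it with $B=D^2F(g)$. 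Substituting the explicit formula above produces the $\tr_g(H_i)^2$ and $\tr_g(H_i\times H_i)$ contributions of order $\nu$ in the drift.

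Finally I would multiply through by $dx_1\wedge dx_2\wedge dx_3$, express all factors of $F(g)\,dx_1\wedge dx_2\wedge dx_3$ back in terms of $\vol(g)$, and collect the coefficients of $dW_i(t)$ and $dt$ to obtain \eqref{dvol g }. The main delicate point is the It\^o correction: one must carefully track the interplay between the minus sign coming from $D(g^{-1})$, the $\tfrac{1}{4}$ versus $\tfrac{1}{2}$ prefactors in $D^2F$, and the $\tfrac{1}{2}$ factor from It\^o's formula, and then translate the componentwise quadratic covariation into the tensorial bilinear evaluation on $D^2F(g)$. Once that bookkeeping is done correctly the result assembles immediately.
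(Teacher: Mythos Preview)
Your approach is essentially the paper's: apply It\^o's formula to $F(g)=\sqrt{\det g}$, using $DF(g)[h]=\tfrac12 F(g)\tr_g(h)$ and the induced second variation. The only cosmetic difference is that you write the second-order contribution as $-\tfrac12\tr_g(H_i\times H_i)$, whereas the paper records it as the directional derivative $H_i\tr_g(H_i)$; since $H_i\tr_g(H_i)=\tr(-g^{-1}H_ig^{-1}H_i)=-\tr_g(H_i\times H_i)$, one last identification is needed to match \eqref{dvol g } verbatim, but the content is identical.
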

\begin{proof}
Using It\^{o} formula for the function $F(g)$ and the fact that 
\[
H_{i}F(g)=\frac{1}{2}\tr_g(H_{i})F(g)
\]
we have
\[
H_{i}H_{i}F(g)=\frac{1}{2}H_{i}(\tr_g(H_{i})F(g))=\frac{1}{2}\big(  H_{i}\tr_g(H_{i}) +\frac{1}{2}\tr_g(H_{i})^2\big)F(g).
\]
As a result we get
\begin{eqnarray*}
&&d\vol(g)=\Big(  \sqrt{\nu}(H_{i}F(g)dW_{i}(t) +KF(g)dt +  \frac{\nu}{2}\sum_{i}  H_{i}H_{i}F(g) dt\Big) dx_1\wedge dx_2\wedge dx_3\\
&=&\Big(  \frac{\sqrt{\nu}}{2}\sum_{i}\tr_g(H_{i})dW_{i}(t) +\frac{1}{2}\tr_g(K)dt +  \frac{{\nu}}{4}\sum_{i} \big(  H_{i}\tr_g(H_{i}) +\frac{1}{2}\tr_g(H_{i})^2\big)dt\Big)\vol(g)\\
&=& \frac{1}{2}\Big(  \sqrt{\nu}\sum_{i}\tr_g(H_{i})dW_{i}(t) + \big[ \tr_g(K) + \frac{{\nu}}{2}\sum_{i}   H_{i}\tr_g(H_{i}) +\frac{1}{2}\tr_g(H_{i})^2\big]dt\Big)\vol(g).
\end{eqnarray*}
\end{proof}
For any   $V(\cdot)\in C^\infty([a,b], \cS_2(T^*M))$ satisfying the conditions  $V(a)=V(b)=0$ consider the variations of \eqref{diff proc}   given by
\begin{equation}\label{diff proc variation}
g(t,s,x) = \sqrt{\nu}\int_a^t \sum_{i} H_{i}(x) dW_{i}(\tau)  + \int_a^t K(\tau) d\tau + sV(t)\quad ;\quad -\eps<s<\eps
\end{equation}
or equivalently
\begin{equation}\label{diff proc variation2} 
dg(t) = \sqrt{\nu} \sum_{i} H_{i}(x) dW_{i}(t) + K(t) dt + s \d_tV(t)dt.
\end{equation}
The following version of integration by parts will be used when computing the perturbed geodesic equations through a variational principle.
\begin{Lem}\label{Lem integration by parts}
\begin{eqnarray*}
&&\E\int_a^b \cG_g\big(K~,~\d_t V\big) dt = -  \E\int_a^b \cG_g \big( -2 K \times K + \d_tK +\frac{1}{2}\tr_g(K) K~,~V\big)dt \\
&&-{\nu}\sum_{i}\int_a^b \cG_g\big(  H_{i}\times H_{i}\times K + K \times H_{i}\times H_{i}   + H_{i}\times K\times H_{i}~,~ V)\\
&&  +   \frac{1}{2} \cG_g\Big(\frac{1}{2}\big(  H_{i}\tr_g(H_{i}) +\frac{1}{2}\tr_g(H_{i})^2\big)K~~{-}~~ \tr_g(H_{i}) \big(H_{i}\times K + K\times H_{i}\big)~,~V\Big) dt.
\end{eqnarray*}
\end{Lem}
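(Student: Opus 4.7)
The plan is to apply It\^o's formula to the real-valued semimartingale
\[
Y(t) := \cG_{g(t)}\bigl(K(t), V(t)\bigr) = \int_M \tr\bigl(g^{-1} K g^{-1} V\bigr)\, F(g)\, dx,
\]
and then integrate in $t$ from $a$ to $b$. Because $V(a)=V(b)=0$, the endpoints of $Y$ vanish, so $\int_a^b dY = 0$; after taking expectation every stochastic integral drops out, leaving an identity among finite-variation contributions that I rearrange to isolate $\E\int_a^b \cG_g(K,\d_t V)\,dt$.

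To compute $dY$, I view the integrand as a product of five semimartingale factors ($g^{-1}$ at two positions, $K$, $V$, and $F(g)$) and apply the It\^o product rule. The finite-variation factors $K$ and $V$ give the direct terms $\cG_g(\d_t K, V)\,dt$ and $\cG_g(K,\d_t V)\,dt$. The drift of $dg^{-1}$ supplied by \eqref{g inverse}, inserted at each of the two matrix positions and simplified by cyclicity of the trace, produces $-2\cG_g(K\times K,V)\,dt$ together with a $\nu$-correction built from $H_i\times H_i\times K$ and $K\times H_i\times H_i$. The drift of $d\vol(g)$ from \eqref{dvol g }, multiplied into the remaining matrix factors, contributes $\tfrac12\cG_g(\tr_g(K)\,K,V)\,dt$ plus the $\nu$-pieces containing $H_i\tr_g(H_i)+\tfrac12\tr_g(H_i)^2$.

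The second-order It\^o corrections arise from three pairs of factors: the two copies of $g^{-1}$, and each copy paired with $F(g)$. Using the martingale part $-\sqrt{\nu}\sum_i g^{-1}H_i g^{-1}\,dW_i$ of $dg^{-1}$ and independence of $\{W_i\}$, the first pair contributes $\nu\sum_i\cG_g(H_i\times K\times H_i,V)\,dt$, producing the third term in the $\nu$-block of the lemma. The remaining two pairs combine this with the martingale part $\tfrac{\sqrt{\nu}}{2}\sum_i\tr_g(H_i)F(g)\,dW_i$ of $dF(g)$ read off from Lemma \ref{Lem dvol} and give cross terms of the form $-\tfrac{\nu}{2}\sum_i\tr_g(H_i)\bigl(H_i\times K+K\times H_i\bigr)$ in the $\cG_g(\,\cdot\,,V)$ integrand.

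Collecting these contributions in $\E\int_a^b dY = 0$ and solving for $\E\int_a^b\cG_g(K,\d_t V)\,dt$ yields the stated identity. The main obstacle is not analytical but combinatorial: keeping track of signs, the $\tfrac12$-factors coming from $F(g)=\sqrt{\det g_{ij}}$, and rewriting each trace $\tr(g^{-1}Ag^{-1}Bg^{-1}V)$ as $\cG_g(A\times B,V)$ via cyclicity, and then grouping the pure-$K$ contributions (first line of the lemma) separately from the $\nu$-dependent contributions (second and third lines).
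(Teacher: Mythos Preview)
Your proposal is correct and follows essentially the same approach as the paper: both apply the It\^o product rule to $\cG_{g(t)}(K,V)=\int_M \tr(g^{-1}Kg^{-1}V)\,\vol(g)$, feed in the formulas for $dg^{-1}$ and $d\vol(g)$, collect the drift and quadratic-covariation contributions, take expectation to kill the martingale parts, and rearrange using $V(a)=V(b)=0$. The only cosmetic difference is that the paper first groups the integrand as the two-factor product $X\cdot Y$ with $X=\tr_g(K\times V)$ and $Y=\vol(g)$ and computes $dX$, $X\,dY$, $\langle dX,dY\rangle$ separately, whereas you expand all five factors at once; the resulting terms and their identification with the three lines of the lemma are identical.
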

\begin{proof}
Set $d(g^{-1}K):=dX^t_1$, $d(g^{-1}V):=dX^t_2$,
\[
dtr(g^{-1}Kg^{-1}V)=trd(g^{-1}Kg^{-1}V):=dX^t 
\]
and $d\vol(g):=dY^t$.
Then 
\begin{eqnarray*}
d\cG_g(K,V)dt = d\int_M \tr_g(KV)\vol(g)= \int_M (dX) Y +XdY + \langle dX,dY\rangle
\end{eqnarray*}
On the other hand,
\begin{eqnarray*}
&&dX=d\tr(X_1X_2)=\tr\big[ (dX_1) X_2+X_1dX_2 + \langle dX_1,dX_2\rangle\big]\\
&=&\tr\Big[\Big(-\sqrt{\nu}\sum_{i} g^{-1}H_{i}g^{-1}dW_{i}(t) -g^{-1}\big(K - \nu\sum_{i} H_{i}g^{-1}H_{i} \big)g^{-1}dt\Big) Kg^{-1}V\\
&& + g^{-1}(\d_tK)g^{-1}V\\
&& + g^{-1}K \Big(   -\sqrt{\nu}\sum_{i} g^{-1}H_{i}g^{-1}dW_{i}(t) -g^{-1} \big(K - \nu\sum_{i} H_{i}g^{-1}H_{i}\big)g^{-1}dt\Big)V\\
&& + g^{-1}Kg^{-1}\d_{t}V\\
&& +\nu\sum_{i} (g^{-1}H_{i}g^{-1}K)( g^{-1}H_{i}g^{-1}V)dt\Big]
\end{eqnarray*}
\begin{eqnarray*}
&=&-\sqrt{\nu}\sum_{i} \tr_g\big( H_{i}\times K\times V\big) dW_{i}(t) -\tr_g\big(K\times  K\times V\big)dt \\
&&+ \nu\sum_{i} \tr_g\big(H_{i}\times H_{i}\times K\times V)\big) dt + \tr_g\big((\d_tK)\times V\big)dt\\
&& -\sqrt{\nu}\sum_{i} \tr_g\big(K\times  H_{i}\times V\big) dW_{i}(t) - \tr_g\big(K\times K\times V\big)dt \\
&&+ \nu\sum_{i} \tr_g\big(K\times H_{i}\times H_{i}\times  V\big)dt +\tr_g\big(  K\times \d_{t}V\big) dt \\
&&+\nu\sum_{i} \tr_g\big( H_{i}\times K\times H_{i}\times V \big)dt \\
%
%
&=&-\sqrt{\nu}\sum_{i} \tr_g\big( H_{i}\times K\times V  + K\times  H_{i}\times V\big) dW_{i}(t)\\
&&+ \Big[ -2\tr_g\big(K \times K\times V\big)   + \tr_g\big((\d_tK)\times V\big)   +\tr_g\big(  K\times \d_{t}V\big)\\
&& + \nu\sum_{i} \tr_g\big(H_{i}\times H_{i}\times K\times V + K\times H_{i}\times H_{i} \times V  + H_{i}\times K\times H_{i}\times V \big)\Big]dt 
\end{eqnarray*}
Now, using Lemma \ref{Lem dvol}, we obtain
\begin{eqnarray*}
XdY &=& \frac{1}{2}\tr_g(K\times V)\Big( \sqrt{\nu}\sum_{i}\tr_g(H_{i})dW_{i}(t)  +\big[\tr_g(K) \\
&&    +  \frac{{\nu}}{2}\sum_{i}   H_{i}\tr_g(H_{i}) +\frac{1}{2}\tr_g(H_{i})^2\big]dt\Big)\vol(g)
\end{eqnarray*}
and finally
\begin{eqnarray*}
\langle dX,dY\rangle=-\frac{\nu}{2} \sum_{i}\tr_g\big( H_{i}\times K\times V  + K\times  H_{i}\times V\big) \tr_g(H_{i})dt.
\end{eqnarray*}
Combining all these results and using the fact that the expectation of the It\^{o} integral $\E\int_a^b Z dW^t_{ij} = 0$ for any square-integrable semi-martingale $Z$, we obtain
\begin{eqnarray*}
&&   \E\int_a^b d\cG_g(K,V)dt\\
&=& \E\int_a^b\int_M \Big[ -2\tr_g\big(K \times K\times V\big)   + \tr_g\big((\d_tK)\times V\big)   +\tr_g\big(  K\times \d_{t}V\big)\\
&& +\nu\sum_{i} \tr_g\big(H_{i}\times H_{i}\times K\times V + K\times H_{i}\times H_{i} \times V  + H_{i}\times K\times H_{i}\times V \big)\Big] \vol(g)dt\\
&& +\frac{1}{2} \E\int_a^b\int_M\tr_g(K\times V)\Big(    \tr_g(K)    +  \frac{\sqrt{v}}{2}\sum_{i}   H_{i}\tr_g(H_{i}) +\frac{1}{2}\tr_g(H_{i})^2\Big)\vol(g)dt\\
&& - \frac{\nu}{2} \sum_{i}\E\int_a^b\int_M \tr_g\big( H_{i}\times K\times V  + K\times  H_{i}\times V\big) \tr_g(H_{i})\vol(g)dt.
\end{eqnarray*}

%
As a result we have
\begin{eqnarray*}
&&\E\int_a^b \cG_g\big(K~,~\d_tV\big) dt = -  \E\int_a^b \cG_g \big( -2 K \times K + \d_tK +\frac{1}{2}\tr_g(K) K~,~V\big)dt \\
&&-{\nu}\sum_{i}\E\int_a^b \cG_g\big(  H_{i}\times H_{i}\times K + K \times H_{i}\times H_{i}   + H_{i}\times K\times H_{i}~,~ V)\\
&&  +   \frac{1}{2} \cG_g\Big(\frac{1}{2}\big(  H_{i}\tr_g(H_{i}) +\frac{1}{2}\tr_g(H_{i})^2\big)K~~{-}~~ \tr_g(H_{i}) \big(H_{i}\times K + K\times H_{i}\big)~,~V\Big) dt.
\end{eqnarray*}
\end{proof}
\subsection{ Stochastic kinetic energy functional}
Let $\mathcal{S}(\cM)$ be the space of all  semi-martingales with $\mathcal{M}$-valued drift. Consider the following stochastic kinetic energy functional on $\mathcal{S}(\cM)$ 
\begin{equation*}\label{stochastic energy functional}
J^{\mathcal{\cG}}(\xi) =  \frac{1}{2}  \E \int_a^b  \cG_g \big( D_t\xi   , D_t\xi \big)      dt
\end{equation*}
where $D_t$ represents the generalized mean derivative. Recall that for a semi-martingale $\xi$ with values in $\cM$, if for any $f \in C^\infty(\cM)$ and $a \leq t \leq b$,  the process
\[
N_t^f:=f(\xi(t))-f(\xi(0)-\frac{1}{2}\int_a^t \mathrm{Hess}f(\xi(\tau))\big(d\xi(\tau),d\xi(\tau)\big)d\tau -\int_a^t A(\tau)f(\xi(\tau))d\tau
\]
is a real valued local semi-martingale, where $A$ is a symmetric 2-tensor, we define $D_t\xi:=A$. 
Roughly speaking, the generalized derivative of the process \eqref{diff proc} gives the bounded variation part $K$.

\begin{The}{}
The semi-martingale $g(\cdot)$ defined by \eqref{diff proc}  is a critical point of $J^\cG$ if and only if the deterministic  path $K \in C^{1}([a, b];\cS_2(T^*M) )$ satisfies the following equations
\begin{eqnarray}\label{modified Euler Lagrange}
\left\{ \begin{array}{ll}  
D_tg =K(t)\\
\d_tK=  K \times K + \frac{1}{4}  \tr_g(K\times K)g   -\frac{1}{2}\tr_g(K)K  -\nu\cL(K)
\end{array}\right.
\end{eqnarray}
where 
\begin{eqnarray*}
\cL(K)&:=& \sum_{i} \Big(  H_{i}\times H_{i}\times K + H_{i}\times K\times H_{i} + K \times H_{i}\times H_{i}   \\
&&  +    \big(\frac{1}{4}\big(  H_{i}\tr_g(H_{i}) +\frac{1}{2}\tr_g(H_{i})^2\big)K ~ - ~ \frac{1}{2}\tr_g(H_{i}) \big(H_{i}\times K + K\times H_{i}\big)~\Big).    
\end{eqnarray*}
\end{The}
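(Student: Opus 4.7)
The strategy is a variational argument along deterministic perturbations of the drift. Since the diffusion part of \eqref{diff proc variation2} is unaffected by the variation, the generalized mean derivative satisfies $D_t g_s = K + s\,\d_t V$; in particular $D_t g = K$, which is the first equation of \eqref{modified Euler Lagrange}. It therefore suffices to compute
\[
\left.\frac{d}{ds}\right|_{s=0}\! J^\cG(g_s)
= \tfrac12\left.\frac{d}{ds}\right|_{s=0} \E\!\int_a^b \cG_{g+sV}\bigl(K+s\,\d_t V,\, K+s\,\d_t V\bigr)\,dt
\]
and to show that this vanishes for every admissible $V$ with $V(a)=V(b)=0$ if and only if the second equation of \eqref{modified Euler Lagrange} holds.

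\textbf{Computing the first variation.} Differentiating at $s=0$ produces two contributions. The contribution from the two arguments of $\cG_g$ is $\E\!\int_a^b \cG_g(K,\d_t V)\,dt$. The contribution from the $g$-dependence of the metric uses the standard identities
\[
\partial_s(g+sV)^{-1}\big|_{s=0}=-g^{-1}Vg^{-1},\qquad \partial_s\vol(g+sV)\big|_{s=0}=\tfrac12\tr_g(V)\vol(g),
\]
together with the cyclic invariance of the trace (so that $\tr_g(V\times K\times K)=\tr_g(K\times K\times V)$), yielding $\tfrac12 \E\!\int_a^b \cG_g\bigl(-2\,K\times K + \tfrac12\tr_g(K\times K)\,g,\,V\bigr)\,dt$. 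I would then apply Lemma \ref{Lem integration by parts} to the first contribution in order to replace $\cG_g(K,\d_t V)$ by the explicit deterministic expression $-\cG_g(-2K\times K + \d_t K + \tfrac12\tr_g(K)K,\,V)$ plus the noise corrections built from the $H_i$.

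\textbf{Conclusion and obstacle.} Gathering the coefficients, the $-K\times K$ from the metric variation cancels against half of the $2K\times K$ produced by integration by parts, leaving $V$ paired with $K\times K + \tfrac14\tr_g(K\times K)\,g - \d_t K - \tfrac12\tr_g(K)K$ minus $\nu$ times the sum of the three triple-product terms together with $\tfrac12$ of the trace-correction piece from Lemma \ref{Lem integration by parts}; these noise terms assemble precisely into $\nu\,\cL(K)$ as stated. Since $K$ is deterministic and $V$ is an arbitrary smooth symmetric $2$-tensor-valued path vanishing at the endpoints, the fundamental lemma of the calculus of variations forces the bracket paired with $V$ to vanish pointwise, which is the second equation of \eqref{modified Euler Lagrange}; and conversely, satisfying this equation makes the first variation vanish. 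The main technical obstacle is the careful bookkeeping of fractional coefficients: the $\tfrac12$ from $\partial_s\vol$, the $\tfrac12$ prefactor in front of $\tfrac12\bigl(H_i\tr_g(H_i)+\tfrac12\tr_g(H_i)^2\bigr)K - \tr_g(H_i)\bigl(H_i\times K + K\times H_i\bigr)$ in Lemma \ref{Lem integration by parts}, and verifying that their product reproduces the $\tfrac14$ appearing inside $\cL(K)$.
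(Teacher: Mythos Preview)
Your proposal is correct and follows essentially the same route as the paper: differentiate $J^\cG$ in $s$, split into the contribution from the arguments $K+s\d_t V$ and the contribution from the $g$-dependence of $\cG_g$ (via $\partial_s g^{-1}$ and $\partial_s\vol$), then invoke Lemma~\ref{Lem integration by parts} to move $\d_t V$ onto $K$ and collect terms against the arbitrary $V$. One small caution on your closing remark: the $\tfrac14$ inside $\cL(K)$ does \emph{not} arise as a product of the $\tfrac12$ from $\partial_s\vol$ with the $\tfrac12$ prefactor in Lemma~\ref{Lem integration by parts}; rather it is the product of the outer $\tfrac12$ and the inner $\tfrac12$ already sitting in the trace-correction piece of Lemma~\ref{Lem integration by parts}, while the $\tfrac12$ from $\partial_s\vol$ (together with the $\tfrac12$ in front of $J^\cG$) is what produces the $\tfrac14\tr_g(K\times K)g$ term in the deterministic part.
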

\begin{proof}
For the variation \eqref{diff proc variation} we have
\begin{eqnarray*}
\d_s|_{s=0}J^{\cG}(g(\cdot,s)) &=&  \frac{1}{2}  \E \d_s|_{s=0}\int_a^b  \cG_g \big( D_tg(t,s)   , D_tg(t,s) \big)      dt\\
&=&  \frac{1}{2}  \E \d_s|_{s=0}\int_a^b  \int_M\tr_g \big( D_tg(t,s)   , D_tg(t,s) \big)   \vol(g)   dt\\
&=&  \frac{1}{2}  \E \int_a^b \d_s|_{s=0} \int_M\tr \big( g^{-1}(t,s)D_tg(t,s)  g^{-1}(t,s)D_tg(t,s) )    \big)   \vol(g)   dt
\end{eqnarray*}

\begin{eqnarray*}
&=&   \frac{1}{2}\E \int_a^b \int_M \Big[  -2\tr \big( g^{-1}V g^{-1}K g^{-1}K    \big) + 2\tr \big( g^{-1}(\d_tV)g^{-1}K  \big)\\
&&+ \tr \big( g^{-1}Kg^{-1}K  \big)\frac{1}{2}\tr(g^{-1}V )\Big]\vol(g)   dt\\
&=&   \E \int_a^b \int_M \Big[  -\tr \big( g^{-1}V g^{-1}K g^{-1}K    \big) + \tr \big( g^{-1}(\d_tV )g^{-1}K  \big)\\
&& + \frac{1}{4}\tr \big( g^{-1}Kg^{-1}K  \big)\tr(g^{-1}{gg^{-1}}V )\Big]\vol(g)   dt\\
&=&   \E \int_a^b  -\cG_g \big( K \times K~,~  V  \big) + \cG_g \big(  K ~,~  \d_t V   \big)   + \frac{1}{4} \cG_g \big( \tr_g(K\times K)g ~,~V  \big)   dt\\
&\stackrel{\textrm{Lemma} ~\ref{Lem integration by parts}}{=}&   \E \int_a^b  -\cG_g \big( K \times K~,~  V   \big) + \frac{1}{4} \cG_g \big( \tr_g(K\times K)g ~,~V  \big)   dt\\
&&-  \E\int_a^b \cG_g \big( -2 K \times K + \d_tK +\frac{1}{2}\tr_g(K) K~,~V \big)dt \\
&&-{\nu}\sum_{i}\E\int_a^b \cG_g\big(  H_{i}\times H_{i}\times K + K \times H_{i}\times H_{i}   + H_{i}\times K\times H_{i}~,~ V )\\
&&  +   \frac{1}{2} \cG_g\Big(\frac{1}{2}\big(  H_{i}\tr_g(H_{i}) +\frac{1}{2}\tr_g(H_{i})^2\big)K~~{-}~~ \tr_g(H_{i}) \big(H_{i}\times K + K\times H_{i}\big)~,~V \Big) dt.
\end{eqnarray*}
\begin{eqnarray*}
&=&   \E\int_a^b  \cG_g \big( k \times K ~ +~ \frac{1}{4}  \tr_g(K\times K)g   -\d_tK -\frac{1}{2}\tr_g(K) K~,~V \big)dt \\
&&-\nu\sum_{i}\E\int_a^b \cG_g\big(  H_{i}\times H_{i}\times K + K \times H_{i}\times H_{i} + H_{i}\times K\times H_{i}~,~ V )\\
&&  +   \frac{1}{2} \cG_g\Big(\frac{1}{2}\big(  H_{i}\tr_g(H_{i}) +\frac{1}{2}\tr_g(H_{i})^2\big)K~-~ \tr_g(H_{i}) \big(H_{i}\times K + K\times H_{i}\big)~,~V \Big) dt.
\end{eqnarray*}
Since the metric $\cG$ is non-degenerate and the variation is arbitrary then we conclude that
\begin{eqnarray*}
\d_tK &=& K \times K ~ +~ \frac{1}{4}  \tr_g(K\times K)g   -\frac{1}{2}\tr_g(K) K \\
&&-\nu\sum_{i} \Big(  H_{i}\times H_{i}\times K +H_{i}\times K\times H_{i} + K \times H_{i}\times H_{i}  \\
&&  +    \big(\frac{1}{4}\big(  H_{i}\tr_g(H_{i}) +\frac{1}{2}\tr_g(H_{i})^2\big)K~-~ \frac{1}{2}\tr_g(H_{i}) \big(H_{i}\times K + K\times H_{i}\big)~\Big).
\end{eqnarray*}
\end{proof}
\begin{Rem}
\textbf{i.} Although the correction term $\cL(K)$ shares similarities with the Laplace-Beltrami operator
\[
\Delta_B K=\sum_{i} \nabla_{H_{i}}\nabla_{H_{i}} K -\nabla_{\nabla_{H_{i}}H_{i}}K
\]
or 
\[
\Delta K=\Delta_B K- \sum_{i} R_g(K,H_{i})H_{i}
\] the terms do not match exactly. There is ongoing research aimed at modifying the process \eqref{diff proc} or the $L^2$ metric  and to provide a geometric meaning of the second-order operator $\cL(K)$.\\
\textbf{ii.} The above discussion is true for $n$-dimensional manifolds $M$ and arbitrary vector fields $\{H_{i}\}$ on $\cM$.\\
\textbf{iii.} When $\nu=0$ the equations \eqref{modified Euler Lagrange} reduce to the deterministic geodesic equation \eqref{geo eq L2}.
\end{Rem}
\noindent\textbf{Acknowledgements}.
The first author acknowledges the support of Portuguese FCT grant UIDB/00208/2023.
The second author was supported by  the Deutsche Forschungsgemeinschaft (DFG, German Research Foundation), project  517512794.

%
%
\bigskip

\end{document}